\newtheorem{theorem}{Theorem}[section]
\newtheorem{corollary}[theorem]{Corollary}
\theoremstyle{definition}
\theoremstyle{remark}
\newtheorem{remark}[theorem]{Remark}
\numberwithin{equation}{section}
\begin{document}

%-------------------------------------------------------------------------
% editorial commands: to be inserted by the editorial office
%
%\firstpage{1} \volume{228} \Copyrightyear{2004} \DOI{003-0001}
%
%
%\seriesextra{Just an add-on}
%\seriesextraline{This is the Concrete Title of this Book\br H.E. R and S.T.C. W, Eds.}
%
% for journals:
%
%\firstpage{1}
%\issuenumber{1}
%\Volumeandyear{1 (2004)}
%\Copyrightyear{2004}
%\DOI{003-xxxx-y}
%\Signet
%\commby{inhouse}
%\submitted{March 14, 2003}
%\received{March 16, 2000}
%\revised{June 1, 2000}
%\accepted{July 22, 2000}
%
%
%
%---------------------------------------------------------------------------
%Insert here the title, affiliations and abstract:
%

\title[Mex-related partition functions of Andrews and Newman]
 {Mex-related partitions and relations to ordinary partition and singular overpartitions}

\author{Rupam Barman}
\address{Department of Mathematics, Indian Institute of Technology Guwahati, Assam, India, PIN- 781039}
\email{rupam@iitg.ac.in}

\author{Ajit Singh}
\address{Department of Mathematics, Indian Institute of Technology Guwahati, Assam, India, PIN- 781039}
\email{ajit18@iitg.ac.in}

\date{September 11, 2020}

%\thanks{We thank Professor Ken Ono for many valuable comments on the article.}
%----------classification, keywords, date

\subjclass{Primary 05A17, 11P83}

\keywords{minimal excludant; mex function; partition; singular overpartition}

%----------additions
\dedicatory{}
%%% ----------------------------------------------------------------------

\begin{abstract}
	In a recent paper, Andrews and Newman introduced certain families of partition functions using the minimal excludant or ``mex'' function. In this article, we study two of the families of functions Andrews and Newman introduced, namely $p_{t,t}(n)$ and $p_{2t,t}(n)$. We establish identities connecting the ordinary partition function $p(n)$ to $p_{t,t}(n)$ and $p_{2t,t}(n)$ for all $t\geq 1$. Using these identities, we prove that the Ramanujan's famous congruences for $p(n)$ are also satisfied by $p_{t,t}(n)$ and $p_{2t,t}(n)$ for infinitely many values of $t$. Very recently, da Silva and Sellers provide complete parity characterizations of $p_{1,1}(n)$ and $p_{3,3}(n)$. We prove that $p_{t,t}(n)\equiv \overline{C}_{4t,t}(n) \pmod{2}$ for all $n\geq 0$ and $t\geq 1$, where $\overline{C}_{4t,t}(n)$ is the Andrews' singular overpartition function. Using this congruence, 
	the parity characterization of $p_{1,1}(n)$ given by da Silva and Sellers follows from that of $\overline{C}_{4,1}(n)$.  We also give elementary proofs of certain congruences already proved by da Silva and Sellers.
\end{abstract}

%%% ----------------------------------------------------------------------
\maketitle
%%% ---------------------------------------------------------------------
\section{Introduction} 
For each set $S$ of positive integers the minimal excludant function (mex-function) is defined as follows: $$\text{mex}(S)=\text{min}(\mathbb{Z}_{>0}\setminus S).$$
Andrews and Newman \cite{Andrews-Newman} recently generalized this function to integer partitions. Given a partition $\lambda$ of $n$, they defined the mex-function $\text{mex}_{A, a}(\lambda)$ to be the smallest positive integer congruent to $a$ modulo $A$ that is not part of $\lambda$. They then defined $p_{A, a}(n)$ to be the number of partitions $\lambda$ of $n$ satisfying $$\text{mex}_{A, a}(\lambda)\equiv a \pmod{2A}.$$
For example, consider $n = 5$, $A = 2$, and $a = 2$. In the table below, we list the
seven partitions $\lambda$ of $5$ and the corresponding values of $\text{mex}_{2, 2}(\lambda)$ for each $\lambda$:\\
\begin{center}
	\begin{tabular}{|c|c|}
		\hline
		Partition $\lambda$&$\text{mex}_{2, 2}(\lambda)$\\
		\hline 
		5&2\\
		$4+1$&2\\
		$3+2$&4\\
		$3+1+1$&2\\
		$2+2+1$&4\\
		$2+1+1+1$&4\\
		$1+1+1+1+1$&2\\
		\hline 
	\end{tabular}
\end{center}
We see that four of the partitions of $5$ satisfy $\text{mex}_{2, 2}(\lambda)\equiv 2\pmod{4}$. Therefore, $p_{2, 2}(5)=4$. 
In \cite[Lemma 9]{Andrews-Newman}, Andrews and Newman proved that the generating function for $p_{t,t}(n)$ is given by
\begin{align}\label{gen-fun}
\sum_{n=0}^{\infty}p_{t,t}(n)q^n=\frac{1}{(q; q)_{\infty}}\sum_{n=0}^{\infty}(-1)^n q^{tn(n+1)/2}
\end{align}
and the generating function for $p_{2t,t}(n)$ is given by
\begin{align}\label{gen-fun1}
\sum_{n=0}^{\infty}p_{2t,t}(n)q^n=\frac{1}{(q; q)_{\infty}}\sum_{n=0}^{\infty}(-1)^n q^{tn^2},
\end{align}
where $\displaystyle (a; q)_{\infty}:= \prod_{j=0}^{\infty}(1-aq^j)$. 
\par The rank of a partition is the largest part minus the number of parts. The crank of a partition is the largest part of the partition if there are no ones as parts, and otherwise is the number of parts larger than the number of ones minus the numbers of ones. In \cite{Andrews-Newman}, Andrews and Newman proved that $p_{1, 1}(n)$ equals the number of partitions of $n$ with non-negative crank and $p_{3,3}(n)$ equals the number of partitions of $n$ with rank $\geq -1$. They also proved that $p_{2,1}(n)$ is equal to the number of partitions of $n$ into even parts. They further proved that $p_{4,2}(n) - p_o(n)$ equals the number of partitions of $n$ into parts congruent to
$\pm 4, \pm 6, \pm 8, \pm 10 \pmod{32}$ and $p_{6,3}(n) - p_o(n)$ equals the number of partitions of $n$ into parts congruent to $\pm 2, \pm 4, \pm 5, \pm 6, \pm 7, \pm 8 \pmod{24}$. Here $p_o(n)$ denotes the number of partitions of $n$ into odd parts. In a very recent paper \cite{BS}, we have proved that $p_{2^{\alpha},2^{\alpha}}(n)$ and $p_{3\cdot2^{\alpha}, 3\cdot2^{\alpha}}(n)$ are almost always even for all $\alpha\geq 1$. Using a result of Ono and Taguchi on nilpotency of Hecke operators, we have also found infinite families of congruences modulo $2$ satisfied by $p_{2^{\alpha},2^{\alpha}}(n)$ and $p_{3\cdot2^{\alpha}, 3\cdot2^{\alpha}}(n)$ for all $\alpha\geq 1$. 
In this article, we express $p_{t,t}(n)$ and $p_{2t,t}(n)$ in terms of the ordinary partition function $p(n)$ for all $t\geq 1$. Using our identities, we find that the partition functions $p_{t,t}(n)$ and $p_{2t,t}(n)$ satisfy the Ramanujan's famous congruences for $p(n)$ for infinitely many values of $t$. 
\par Beginning with the paper \cite{corteel2004}, Corteel and Lovejoy introduced and developed the theory of overpartitions. An overpartition of $n$ is a non-increasing sequence of natural numbers whose sum is $n$ in which the first occurrence of a number may be overlined.
In order to give overpartition analogues of Rogers-Ramanujan type theorems for the ordinary partition function with restricted successive ranks, Andrews \cite{andrews2015} defined the so-called singular overpartitions. Andrews' singular overpartition function $\overline{C}_{k, i}(n)$ counts the number of overpartitions of $n$ in which no part is divisible by $k$ and only parts $\equiv \pm i\pmod{k}$ may be overlined. For example, $\overline{C}_{3, 1}(4)=10$ with the relevant partitions being $4, \overline{4}, 2+2, \overline{2}+2, 2+1+1, \overline{2}+1+1, 2+\overline{1}+1, \overline{2}+\overline{1}+1, 1+1+1+1, \overline{1}+1+1+1$. 
For $k\geq 3$ and $1\leq i \leq \left\lfloor\frac{k}{2}\right\rfloor$, the generating function for $\overline{C}_{k, i}(n)$ is given by
\begin{align}\label{gen-fun-sop}
\sum_{n=0}^{\infty}\overline{C}_{k, i}(n)q^n=\frac{(q^k; q^k)_{\infty}(-q^i; q^k)_{\infty}(-q^{k-i}; q^k)_{\infty}}{(q; q)_{\infty}}.
\end{align}
In this article, we prove that $p_{t,t}(n)$ and $\overline{C}_{4t,t}(n)$ have the same parity for all $t\geq 1$. Using this, we find new congruences satisfied by $p_{t,t}(n)$. We also give elementary proofs of certain congruences proved by da Silva and Sellers in \cite{sellers}.
 %%%%%%%%%%%%%%%%%%%%%%%%%%%%
 \section{Mex-related partitions and relations to ordinary partition} Let $p(n)$ denote the ordinary partition function. We adopt the convention that $p(n)=0$ when $n$ is a negative integer. In the following theorem, we express $p_{t,t}(n)$ and $p_{2t,t}(n)$ in terms of $p(n)$.
 \begin{theorem}\label{thm1}
 	Let $t$ be a positive integer. Then, for all non-negative integer $n$, we have
 	\begin{align}\label{thm1.1}
 	p_{t,t}(n)= p(n)+\sum_{r=1}^{\infty}p(n-tr(2r+1))-\sum_{s=1}^{\infty}p(n-ts(2s-1))
 	\end{align}
 	and 
 	\begin{align}\label{thm1.2}
 	p_{2t,t}(n)= p(n)+\sum_{r=1}^{\infty}p(n-4tr^2)-\sum_{s=1}^{\infty}p(n-t(2s-1)^2).
 	\end{align}
 \end{theorem}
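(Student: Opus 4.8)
The plan is to read off both identities directly from the generating functions \eqref{gen-fun} and \eqref{gen-fun1} by splitting the inner series according to the parity of the summation index. Since $1/(q;q)_\infty = \sum_{m\geq 0} p(m)q^m$, once the theta-type series on the right is written as a signed sum of monomials, multiplying by this series and reading off the coefficient of $q^n$ produces exactly the stated convolutions over $p$.

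For \eqref{thm1.1}, I would split the index $n$ in $\sum_{n\geq 0}(-1)^n q^{tn(n+1)/2}$ into even and odd values. Writing $n=2r$ with $r\geq 0$ gives sign $(-1)^{2r}=1$ and exponent $t\cdot 2r(2r+1)/2 = tr(2r+1)$, while writing $n=2s-1$ with $s\geq 1$ gives sign $-1$ and exponent $t(2s-1)(2s)/2 = ts(2s-1)$. The $r=0$ term contributes the constant $1$, so
\[
\sum_{n=0}^\infty (-1)^n q^{tn(n+1)/2} = 1 + \sum_{r=1}^\infty q^{tr(2r+1)} - \sum_{s=1}^\infty q^{ts(2s-1)}.
\]
Multiplying by $\sum_{m\geq 0} p(m)q^m$ and comparing coefficients of $q^n$ on both sides then yields \eqref{thm1.1}.

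For \eqref{thm1.2}, the same parity split applied to $\sum_{n\geq 0}(-1)^n q^{tn^2}$ simply replaces the triangular exponent by $tn^2$: an even index $n=2r$ gives exponent $t(2r)^2 = 4tr^2$ with sign $+1$, and an odd index $n=2s-1$ gives exponent $t(2s-1)^2$ with sign $-1$. Again isolating the $r=0$ even-index term as the leading $1$ and multiplying the resulting series by the generating function for $p(n)$ delivers \eqref{thm1.2} upon extracting the coefficient of $q^n$.

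The convention $p(m)=0$ for $m<0$ guarantees that each of the infinite sums over $r$ and $s$ is actually finite for any fixed $n$, so the coefficient extraction is legitimate term by term and no convergence issue arises. I do not anticipate a genuine obstacle: the entire content is the bookkeeping of the parity split, and the only point requiring care is to peel off the $r=0$ even-index term to supply the leading $p(n)$ without double-counting it in the $r\geq 1$ sum.
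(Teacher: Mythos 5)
Your proposal is correct and is essentially identical to the paper's own proof: both split the theta-type series by the parity of the summation index, identify the $n=2r$ and $n=2s-1$ terms with the exponents $tr(2r+1)$ (resp.\ $4tr^2$) and $ts(2s-1)$ (resp.\ $t(2s-1)^2$), and then extract the coefficient of $q^n$ after multiplying by $\sum_{m\ge 0}p(m)q^m$. No issues.
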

 \begin{proof}
 	We know that the generating function for the partition function $p(n)$ is given by
 	\begin{align*}
 	\sum_{n=0}^{\infty} p(n) q^{n}=\frac{1}{(q ; q)_{\infty}}.
 	\end{align*}
 	From \eqref{gen-fun}, we find that
 	\begin{align*}
 	\sum_{n=0}^{\infty}p_{t,t}(n)q^n&=\frac{1}{(q; q)_{\infty}}\sum_{n=0}^{\infty}(-1)^n q^{tn(n+1)/2}\\
 	&=\left(\sum_{n=0}^{\infty} p(n) q^{n}\right)\left(1+\sum_{r=1}^{\infty} q^{tr(2r+1)}-\sum_{s=1}^{\infty} q^{ts(2s-1)}\right)\\
 	&=\sum_{n=0}^{\infty}\left(p(n)+\sum_{r=1}^{\infty}p(n-tr(2r+1))-\sum_{s=1}^{\infty}p(n-ts(2s-1))\right)q^n.
 	\end{align*}
 	Thus, for all non-negative integer $n$, we have 
 	\begin{align}\label{eqn-NEW-1}
 	p_{t,t}(n)= p(n)+\sum_{r=1}^{\infty}p(n-tr(2r+1))-\sum_{s=1}^{\infty}p(n-ts(2s-1)).
 	\end{align}  
 	Again, from \eqref{gen-fun1}, we find that
 	\begin{align*}
 	\sum_{n=0}^{\infty}p_{2t,t}(n)q^n&=\frac{1}{(q; q)_{\infty}}\sum_{n=0}^{\infty}(-1)^n q^{tn^2}\\
 	&=\left(\sum_{n=0}^{\infty} p(n) q^{n}\right)\left(1+\sum_{r=1}^{\infty} q^{4tr^2}-\sum_{s=1}^{\infty} q^{t(2s-1)^2}\right)\\
 	&=\sum_{n=0}^{\infty}\left(p(n)+\sum_{r=1}^{\infty}p(n-4tr^2)-\sum_{s=1}^{\infty}p(n-t(2s-1)^2)\right)q^n.
 	\end{align*}
 	Thus, for all non-negative integer $n$, we have  
 	\begin{align*}
 	p_{2t,t}(n)= p(n)+\sum_{r=1}^{\infty}p(n-4tr^2)-\sum_{s=1}^{\infty}p(n-t(2s-1)^2).
 	\end{align*}
 	This completes the proof of the theorem.	
 \end{proof}
In the following theorem, we prove that $p_{t,t}(n)$ and $p_{2t,t}(n)$ satisfy Ramanujan-type congruences, and these congruences follow from those satisfied by the ordinary partition function $p(n)$.  
 \begin{theorem}\label{thm2}
 	Let $m, a\geq 1$ and $b$ be integers. Suppose  that $p(an+b)\equiv 0 \pmod m$ for all non-negative integer $n$. Then, for all $t\geq 1$, we have 
 	\begin{align*}
 	p_{at,at}(an+b)\equiv 0 \pmod m
 	\end{align*}
 	and 
 	\begin{align*}
 	p_{2at,at}(an+b)\equiv 0 \pmod m
 	\end{align*}
 	for all non-negative integer $n$.
 \end{theorem}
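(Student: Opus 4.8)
The plan is to feed the two identities of Theorem~\ref{thm1} directly into the hypothesis, exploiting the fact that replacing $t$ by $at$ makes every shift appearing in the two sums a multiple of $a$. Applying \eqref{thm1.1} with $t$ replaced by $at$ gives, for every non-negative integer $N$,
\begin{align*}
p_{at,at}(N)=p(N)+\sum_{r=1}^{\infty}p\bigl(N-atr(2r+1)\bigr)-\sum_{s=1}^{\infty}p\bigl(N-ats(2s-1)\bigr),
\end{align*}
and similarly \eqref{thm1.2} with $t\mapsto at$ gives
\begin{align*}
p_{2at,at}(N)=p(N)+\sum_{r=1}^{\infty}p\bigl(N-4atr^2\bigr)-\sum_{s=1}^{\infty}p\bigl(N-at(2s-1)^2\bigr).
\end{align*}
For each fixed $N$ both sums are finite, since $p$ vanishes at negative arguments.

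Next I would substitute $N=an+b$ and read each term modulo $m$. The crucial observation is that every quantity subtracted from $N$ is divisible by $a$: indeed $atr(2r+1)=a\cdot tr(2r+1)$, $ats(2s-1)=a\cdot ts(2s-1)$, $4atr^2=a\cdot 4tr^2$, and $at(2s-1)^2=a\cdot t(2s-1)^2$. Hence a typical summand has argument $a(n-k)+b$ for some non-negative integer $k$ (namely $k=tr(2r+1)$, $ts(2s-1)$, $4tr^2$, or $t(2s-1)^2$). If $n-k\ge 0$, then $a(n-k)+b$ has the form $an'+b$ with $n'\ge 0$, so $p\bigl(a(n-k)+b\bigr)\equiv 0\pmod m$ by hypothesis. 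If $n-k<0$, then $a(n-k)+b\le b-a<0$, so the term is $0$ by the convention that $p$ vanishes on negative integers; either way the term is $\equiv 0\pmod m$.

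Since the leading term $p(an+b)$ is itself $\equiv 0\pmod m$ by hypothesis, every summand on the right-hand side of each identity is divisible by $m$, and as the sums are finite it follows immediately that $p_{at,at}(an+b)\equiv 0\pmod m$ and $p_{2at,at}(an+b)\equiv 0\pmod m$ for all $n\ge 0$ and all $t\ge 1$. The two cases are identical in structure, so once the first is written out the second follows by the same reasoning verbatim.

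The one delicate point, and the step I expect to require the most care, is the treatment of the ``overshoot'' terms with $n-k<0$: one must check that their arguments are genuinely negative (so that the convention annihilates them) rather than landing on some non-negative integer lying outside the progression $\{\,an'+b:n'\ge 0\,\}$ that the hypothesis does not control. This is exactly where the normalization $0\le b<a$ of Ramanujan-type congruences is used, since it guarantees $a(n-k)+b\le b-a<0$ whenever $n-k<0$. Beyond this observation the proof is a clean transfer of the divisibility of $p(an+b)$ through the identities of Theorem~\ref{thm1}, requiring no further input.
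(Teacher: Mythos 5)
Your argument is essentially the paper's own proof: substitute $t\mapsto at$ and $n\mapsto an+b$ into the identities of Theorem~\ref{thm1}, note that every shifted argument has the form $a(n-k)+b$, and finish using the hypothesis for $n-k\ge 0$ and the convention $p(\text{negative})=0$ for the overshoot terms. The ``delicate point'' you flag is real but is shared by the paper, whose proof asserts without comment that the surviving terms in \eqref{eqn-NEW-2} have $n-tr(2r+1)\ge 0$: both arguments tacitly need $b<a$ (a normalization absent from the stated hypotheses but satisfied in every application, e.g.\ Corollary~\ref{ramanujan-cong}) to ensure the overshoot terms land at genuinely negative arguments rather than at uncontrolled non-negative ones.
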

 \begin{proof}
 Let $n\geq 0$. From \eqref{thm1.1}, we obtain 
 	\begin{align}\label{eqn-NEW-2}
 	&p_{at,at}(an+b)\notag\\
 	&=p(an+b)+\sum_{r=1}^{\infty}p(a(n-tr(2r+1))+b)-\sum_{s=1}^{\infty}p(a(n-ts(2s-1))+b).
 	\end{align}
 	We note that the terms remaining in the sums in \eqref{eqn-NEW-1} satisfy that $n-tr(2r+1)$ and $n-ts(2s-1)$ are non-negative. Hence, the same is true in \eqref{eqn-NEW-2}. Now, if $p(\ell a+b)\equiv 0\pmod{m}$ for every non-negative integer $\ell$, then \eqref{eqn-NEW-2} yields that $p_{at, at}(an+b)\equiv 0\pmod{m}$. This completes the proof of the first congruence of the theorem. 
 	\par Using \eqref{thm1.2} and proceeding along similar lines, we prove the second congruence. This completes the proof of the theorem.	
 \end{proof}
 As an application of Theorem \ref{thm2}, we find that $p_{t,t}(n)$ and $p_{2t, t}(n)$ satisfy the Ramanujan's famous congruences for certain infinite families of $t$. Much to Ramanujan's credit, the ``Ramanujan congruences'' for $p(n)$ are given below. If $k\geq 1$, then for every non-negative integer $n$, we have 
 \begin{align*}
 p\left(5^{k} n+\delta_{5, k}\right) & \equiv 0 \pmod {5^{k}}; \\
 p\left(7^{k} n+\delta_{7, k}\right) & \equiv 0 \pmod {7^{[k / 2]+1}}; \\
 p\left(11^{k} n+\delta_{11, k}\right) & \equiv 0 \pmod{ 11^{k}},
 \end{align*}
 where $\delta_{p, k}:= 1/24\pmod {p^k}$ for $p= 5, 7,11$.  In the following, we prove that $p_{at,at}(n)$ and $p_{2at,at}(n)$ satisfy the Ramanujan congruences when $a=5^k, 7^k, 11^k$.
 \begin{corollary}\label{ramanujan-cong}
 For all $k, t\geq 1$ and for every non-negative integer $n$, we have  
 \begin{align*}
 p_{5^{k}t,5^{k}t}\left(5^{k} n+\delta_{5, k}\right) & \equiv 0 \pmod {5^{k}}; \\
 p_{7^{k}t,7^{k}t}\left(7^{k} n+\delta_{7, k}\right) & \equiv 0 \pmod {7^{[k / 2]+1}}; \\
 p_{11^{k}t,11^{k}t}\left(11^{k} n+\delta_{11, k}\right) & \equiv 0 \pmod{ 11^{k}};\\
 p_{2\cdot5^{k}t,5^{k}t}\left(5^{k} n+\delta_{5, k}\right) & \equiv 0 \pmod {5^{k}}; \\
 p_{2\cdot7^{k}t,7^{k}t}\left(7^{k} n+\delta_{7, k}\right) & \equiv 0 \pmod {7^{[k / 2]+1}}; \\
 p_{2\cdot11^{k}t,11^{k}t}\left(11^{k} n+\delta_{11, k}\right) & \equiv 0 \pmod{ 11^{k}}.
 \end{align*}
 \end{corollary}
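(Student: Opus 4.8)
The plan is to apply Theorem~\ref{thm2} directly, feeding in the classical Ramanujan congruences for $p(n)$ as its hypothesis. First I would recall that these congruences assert precisely that, for each prime $p\in\{5,7,11\}$ and each $k\geq 1$, one has $p(p^k n+\delta_{p,k})\equiv 0\pmod{m_{p,k}}$ for all non-negative integers $n$, where $m_{5,k}=5^k$, $m_{7,k}=7^{[k/2]+1}$, and $m_{11,k}=11^k$. Each of these is exactly a statement of the shape $p(an+b)\equiv 0\pmod m$ that serves as the input to Theorem~\ref{thm2}, with the identifications $a=p^k$, $b=\delta_{p,k}$, and $m=m_{p,k}$.

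Next I would substitute each of these three congruences into Theorem~\ref{thm2}. Taking $a=5^k$, $b=\delta_{5,k}$, $m=5^k$, the first conclusion of Theorem~\ref{thm2} gives $p_{5^k t,5^k t}(5^k n+\delta_{5,k})\equiv 0\pmod{5^k}$, while the second conclusion gives $p_{2\cdot 5^k t,5^k t}(5^k n+\delta_{5,k})\equiv 0\pmod{5^k}$, both holding for all $t\geq 1$ and all $n\geq 0$. Repeating the substitution with $a=7^k$, $m=7^{[k/2]+1}$ and then with $a=11^k$, $m=11^k$ produces the remaining four congruences in the statement.

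Since every assertion of the corollary is obtained by a single application of Theorem~\ref{thm2} with the appropriate choice of $(a,b,m)$, there is essentially no independent obstacle to surmount: the entire mechanism is already packaged inside Theorem~\ref{thm2}, whose proof in turn rests only on the identities of Theorem~\ref{thm1}. The one point I would check carefully is that the modulus claimed for each prime matches the modulus in the corresponding Ramanujan congruence—in particular the exponent $7^{[k/2]+1}$ attached to the prime $7$—but this is a matter of reading off the classical congruences correctly rather than a genuine difficulty.
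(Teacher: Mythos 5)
Your proposal is correct and matches the paper's own proof, which likewise just invokes Theorem~\ref{thm2} with $a=5^k,7^k,11^k$, $b=\delta_{p,k}$, and the corresponding Ramanujan modulus. Your version simply spells out the substitutions more explicitly than the paper does.
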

\begin{proof}
Combining Ramanujan congruences for $p(n)$ and Theorem \ref{thm2} we readily obtain that $p_{at,at}(n)$ and $p_{2at,at}(n)$ satisfy the Ramanujan congruences when $a=5^k, 7^k, 11^k$. This completes the proof.
\end{proof}
 \begin{remark}
 In \cite{sellers}, da Silva and Sellers proposed to undertake a more extensive investigation of the properties of $p_{2t,t}(n)$. Based on extensive computations, they remarked that $p_{2t,t}(n)$ need not satisfy Ramanujan-like congruences. But Corollary \ref{ramanujan-cong} disproves their observation. We have also verified Corollary \ref{ramanujan-cong} numerically for large values of the parameters.
 \end{remark}
 %%%%%%%%%%%%%%%%%%%%%%%%%%%%%%%%%%%%%%%
 \section{Mex-related partitions and relations to singular overpartitions}
 In this section we relate the mex-related partition functions to the Andrews' singular overpartion functions. This helps us to find new congruences satisfied by the mex-related partition functions. In the following theorem, we prove that both $p_{t,t}(n)$ and $\overline{C}_{4t,t}(n)$ have the same parity. 
 \begin{theorem}\label{thm3}
 	Let $t$ be a positive integer. Then, for all $n\geq 0$, we have 
 	\begin{align*}
 	p_{t,t}(n)\equiv \overline{C}_{4t,t}(n) \pmod 2.
 	\end{align*}
 \end{theorem}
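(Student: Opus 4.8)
The plan is to read off both sides from the generating functions already in hand. Taking $k=4t$ and $i=t$ in \eqref{gen-fun-sop} is legitimate for every $t\ge 1$, since $4t\ge 3$ and $1\le t\le\lfloor 4t/2\rfloor=2t$. The point to exploit is that the two generating functions
\[
\sum_{n=0}^{\infty}p_{t,t}(n)q^n=\frac{1}{(q;q)_{\infty}}\sum_{n=0}^{\infty}(-1)^n q^{tn(n+1)/2},
\qquad
\sum_{n=0}^{\infty}\overline{C}_{4t,t}(n)q^n=\frac{(q^{4t};q^{4t})_{\infty}(-q^{t};q^{4t})_{\infty}(-q^{3t};q^{4t})_{\infty}}{(q;q)_{\infty}}
\]
carry the \emph{same} denominator $1/(q;q)_\infty$, which lies in $\mathbb{Z}[[q]]$. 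Hence if the two numerators differ by $2C(q)$ with $C\in\mathbb{Z}[[q]]$, the generating functions differ by $2\,C(q)/(q;q)_\infty\in 2\mathbb{Z}[[q]]$, so it suffices to prove the numerators agree modulo $2$. First I would record this reduction carefully, since it is what lets me discard the common denominator.

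Next I would remove the parameter $t$. Every exponent in both numerators is a multiple of $t$, so writing $Q=q^{t}$ reduces the claim to the single $t$-free congruence
\[
\sum_{n=0}^{\infty}(-1)^n Q^{n(n+1)/2}\equiv (Q^{4};Q^{4})_{\infty}(-Q;Q^{4})_{\infty}(-Q^{3};Q^{4})_{\infty}\pmod 2 .
\]
To handle the right side I would drop every sign modulo $2$ (using that $1\pm x$ have the same coefficients mod $2$, factor by factor in each infinite product), giving $(-Q;Q^4)_\infty(-Q^3;Q^4)_\infty\equiv (Q;Q^4)_\infty(Q^3;Q^4)_\infty=(Q;Q^2)_\infty$, since those two products together exhaust all odd exponents; thus the right side is $\equiv (Q;Q^2)_\infty(Q^4;Q^4)_\infty$. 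For the left side I would discard the alternating sign as well and invoke Gauss's identity $\sum_{n\ge 0}Q^{n(n+1)/2}=(Q^2;Q^2)_\infty/(Q;Q^2)_\infty$. The congruence to be proved then collapses to
\[
\frac{(Q^{2};Q^{2})_{\infty}}{(Q;Q^{2})_{\infty}}\equiv (Q;Q^{2})_{\infty}(Q^{4};Q^{4})_{\infty}\pmod 2 .
\]

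To finish I would clear the unit $(Q;Q^2)_\infty$ and verify $(Q^2;Q^2)_\infty\equiv (Q;Q^2)_\infty^{2}(Q^4;Q^4)_\infty\pmod 2$. Here the two ingredients are the Frobenius-type congruence $f(Q)^2\equiv f(Q^2)\pmod 2$ for $f\in\mathbb{Z}[[Q]]$, which gives $(Q;Q^2)_\infty^{2}\equiv (Q^2;Q^4)_\infty$, and the exact factorization $(Q^2;Q^4)_\infty(Q^4;Q^4)_\infty=(Q^2;Q^2)_\infty$, obtained by splitting the even exponents into those $\equiv 2$ and those $\equiv 0\pmod 4$. Combining these yields the displayed identity and closes the argument. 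The calculations are elementary; the only step needing genuine care is the formal bookkeeping of infinite products modulo $2$ — namely justifying that signs may be dropped factor by factor and that dividing by the units $(q;q)_\infty$ and $(Q;Q^2)_\infty$ preserves congruence mod $2$. I expect that the real work is simply lining up the alternating theta series on the left with the triple product on the right, which is exactly what the Gauss identity together with the squaring congruence accomplishes.
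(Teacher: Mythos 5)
Your proof is correct and follows essentially the same route as the paper's: both arguments drop signs modulo $2$, invoke the product form of $\sum_{n\ge 0} q^{n(n+1)/2}$ (Gauss's identity, equivalently Ramanujan's $\psi$), and finish with the squaring congruence $f(q)^2\equiv f(q^2)\pmod 2$. The only difference is bookkeeping: the paper reduces both generating functions to the common expression $(q^t;q^t)_\infty^3/(q;q)_\infty$ modulo $2$, whereas you cancel the shared unit $1/(q;q)_\infty$, set $Q=q^t$, and compare numerators directly.
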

 \begin{proof}
 	From \eqref{gen-fun}, we have
 	\begin{align}\label{thm3-new1}
 	\sum_{n=0}^{\infty} p_{t,t}(n)q^{n}=\frac{1}{(q; q)_{\infty}} \sum_{n=0}^{\infty}(-1)^{n}q^{t n(n+1)/2}.
 	\end{align}
 	Employing the Ramanujan's theta function
 	\begin{align*}
 	\psi(q):=\sum_{n=0}^{\infty} q^{n(n+1) / 2}=\frac{\left(q^{2} ; q^{2}\right)_{\infty}^{2}}{(q ; q)_{\infty}}
 	\end{align*}
 	into \eqref{thm3-new1}, we find that
 	\begin{align}\label{thm3.2}
 	\sum_{n=0}^{\infty} p_{t,t}(n) q^{n} &\equiv \frac{1}{(q; q)_{\infty}} \sum_{n=0}^{\infty} q^{t n(n+1)/2}\pmod{2}\nonumber\\
 	&=\frac{1}{(q ; q)_{\infty}} \frac{\left(q^{2t} ; q^{2t}\right)_{\infty}^{2}}{\left(q^t; q^t\right)_{\infty}}\nonumber \\
 	&\equiv \frac{1}{(q ; q)_{\infty}} \frac{\left(q^{t} ; q^{t}\right)_{\infty}^{4}}{\left(q^{t} ; q^{t}\right)_{\infty}}\pmod{2}\nonumber\\
 	&\equiv\frac{\left(q^{t} ; q^{t}\right)_{\infty}^{3}}{(q ; q)_{\infty}}\pmod{2}. 
 	\end{align}
 	From \eqref{gen-fun-sop}, we find that 
 	\begin{align}\label{thm3.3}
 	\sum_{n=0}^{\infty} \overline{C}_{4t, t}(n) q^{n} &=\frac{\left(q^{4t} ; q^{4t}\right)_{\infty}\left(-q^{t} ; q^{4 t}\right)_{\infty}\left(-q^{3t} ; q^{4t}\right)_{\infty}}{(q ; q)_{\infty}}\nonumber \\
 	&=\frac{\left(q^{4t} ; q^{4t}\right)_{\infty}\left(-q^{t} ; q^{4t}\right)_{\infty}\left(-q^{3t} ; q^{4 t}\right)_{\infty}\left(-q^{2t} ; q^{4t}\right)_{\infty}\left(-q^{4t} ; q^{4t}\right)_{\infty}}{(q ; q)_{\infty}\left(-q^{2t} ; q^{4t}\right)_{\infty}\left(-q^{4t} ; q^{4t}\right)_{\infty}} \nonumber \\
 	&=\frac{\left(q^{4t} ; q^{4t}\right)_{\infty}\left(-q^{t} ; q^{t}\right)_{\infty}}{(q ; q)_{\infty}\left(-q^{2t} ; q^{2t}\right)_{\infty}} \nonumber\\
 	&=\frac{\left(q^{4t} ; q^{4t}\right)_{\infty}\left(q^{2t} ; q^{2t}\right)_{\infty}^{2}}{(q ; q)_{\infty}\left(q^{t} ; q^{t}\right)_{\infty}\left(q^{4t} ; q^{4t}\right)_{\infty}} \nonumber\\
 	& \equiv \frac{\left(q^{t} ; q^{t}\right)_{\infty}^{3}}{(q ; q)_{\infty}} \pmod 2.
 	\end{align}
 	Thus, combining \eqref{thm3.2} and \eqref{thm3.3}, we have
 	\begin{align*}
 	\sum_{n=0}^{\infty}p_{t,t}(n)\equiv \sum_{n=0}^{\infty}\overline{C}_{4t,t}(n)\pmod 2.
 	\end{align*}	
 	This completes the proof of the theorem.
 \end{proof}
In \cite{chen2015}, Chen, Hirschhorn and Sellers proved that, for all $n\geq 1$,
\begin{align*}
\overline{C}_{4, 1}(n)=\begin{cases}
1 \pmod{2}, & \mbox{if $n=k(3k- 1)$ for some $k$};\\
0\pmod{2}, & \mbox{otherwise}.\nonumber
\end{cases}
\end{align*} 
 Due to Theorem \ref{thm3}, we have the same parity characterization for $p_{1,1}(n)$. Recently, da Silva and Sellers also found the same parity characterization for $p_{1,1}(n)$ (see for example \cite[Theorem 4]{sellers}). To the best of our knowledge, the parity characterization for $\overline{C}_{12, 3}(n)$ is not known till date. In \cite{sellers}, da Silva and Sellers found the parity characterization for $p_{3,3}(n)$ (see for example \cite[Theorem 7]{sellers}). Combining \cite[Theorem 7]{sellers} and Theorem \ref{thm3}, we have the following parity characterization for $\overline{C}_{12, 3}(n)$:
 \begin{corollary}
 	For all $n \geq 1,$ we have
 	\begin{align*}
 	\overline{C}_{12, 3}(n)=\begin{cases}
 	1 \pmod{2}, & \mbox{if $3n+1$ is a square};\\
 	0\pmod{2}, & \mbox{otherwise}.\nonumber
 	\end{cases}
 	\end{align*}
 \end{corollary}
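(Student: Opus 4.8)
The plan is to obtain this characterization essentially for free by transporting the known parity characterization of $p_{3,3}(n)$ across the congruence supplied by Theorem \ref{thm3}. First I would specialize Theorem \ref{thm3} to $t=3$, which gives $p_{3,3}(n)\equiv \overline{C}_{12,3}(n)\pmod 2$ for every $n\geq 0$. Thus the two sequences agree in parity term by term, and it suffices to determine the parity of $p_{3,3}(n)$.

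For that I would invoke the result of da Silva and Sellers \cite[Theorem 7]{sellers}, which asserts precisely that, for $n\geq 1$, the value $p_{3,3}(n)$ is odd if and only if $3n+1$ is a perfect square. Combining this with the congruence of the previous paragraph immediately yields that $\overline{C}_{12,3}(n)$ is odd exactly when $3n+1$ is a square, and even otherwise, which is the assertion of the corollary. In effect the only genuine content beyond Theorem \ref{thm3} is the parity law for $p_{3,3}(n)$; the passage to $\overline{C}_{12,3}(n)$ is a one-line transfer through the mod-$2$ identity.

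If one wished to make the argument self-contained rather than citing \cite{sellers}, the main obstacle would be re-deriving this parity law, and that is where the real work would lie. I would start from the mod-$2$ reduction already isolated in the proof of Theorem \ref{thm3}, namely $\sum_{n\ge 0} p_{3,3}(n)q^n\equiv (q^3;q^3)_\infty^3/(q;q)_\infty\pmod 2$, apply Jacobi's identity in the form $(q^3;q^3)_\infty^3\equiv\sum_{k\ge 0}q^{3k(k+1)/2}\pmod 2$, and then extract the relevant exponents from $1/(q;q)_\infty$ by a dissection argument. The delicate bookkeeping would be to check that the surviving exponents are exactly the integers $n$ with $3n+1=m^2$, equivalently $n=k(3k\pm 2)$ for some $k$, coming from $m=3k\pm1$. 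But given the statements already available in the excerpt, the clean route is simply the combination described in the first two paragraphs.
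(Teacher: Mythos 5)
Your proposal is correct and matches the paper's argument exactly: the paper likewise obtains the corollary by combining the $t=3$ case of Theorem \ref{thm3} with the parity characterization of $p_{3,3}(n)$ from \cite[Theorem 7]{sellers}. The additional sketch of a self-contained derivation is a reasonable aside but is not part of the paper's (one-line) proof.
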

 %%%%%%%%%%%%%%%%%%%%%%%%%%%%%%%%%%%%%%%%%
 We next use knwon congruences for Andrews' singular overpartition functions $\overline{C}_{4t,t}(n)$ and combine them with Theorem \ref{thm3} to deduce new congruences for $p_{t,t}(n)$ for different values of $t$. Our list of congruences obtained this way need not be exhaustive.
 \begin{theorem} \label{thm5}
 	Let $p \geq 5$ be a prime and $p \not\equiv 1 \pmod {12}$. Then, for all $k, n \geq 0$ with $p \nmid n$, we have 
 	\begin{align*}
 	p_{1,1}\left(p^{2 k+1} n+\frac{p^{2 k+2}-1}{12}\right) \equiv 0 \pmod 2.
 	\end{align*}
 \end{theorem}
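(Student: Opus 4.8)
The plan is to reduce the statement about $p_{1,1}$ to the Chen--Hirschhorn--Sellers parity characterization of $\overline{C}_{4,1}$ that is already recorded in the excerpt. First I would invoke Theorem~\ref{thm3} with $t=1$, which gives $p_{1,1}(m)\equiv\overline{C}_{4,1}(m)\pmod 2$ for every $m\ge 0$. Hence it suffices to prove that $\overline{C}_{4,1}(N)$ is even, where $N:=p^{2k+1}n+\frac{p^{2k+2}-1}{12}$. Here one should first note that $N$ is a genuine integer: since $p\ge 5$ is prime it is coprime to $12$, and $p^{2}\equiv 1\pmod{12}$, so $p^{2k+2}=(p^{2})^{k+1}\equiv 1\pmod{12}$, making $\frac{p^{2k+2}-1}{12}$ an integer.

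Next I would recast the known parity characterization as an arithmetic criterion. The cited result says $\overline{C}_{4,1}(m)$ is odd exactly when $m=j(3j-1)$ for some integer $j$. Completing the square gives $12\,j(3j-1)+1=(6j-1)^2$; conversely, any solution of $12m+1=s^{2}$ forces $s$ to be coprime to $6$, hence $s\equiv\pm 1\pmod 6$, and writing $s=6j\mp1$ returns $m=j(3j-1)$. Therefore $\overline{C}_{4,1}(m)$ is odd if and only if $12m+1$ is a perfect square, and the whole problem collapses to showing that $12N+1$ is not a perfect square.

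The decisive computation is then fully explicit: $12N+1=12p^{2k+1}n+p^{2k+2}=p^{2k+1}\bigl(12n+p\bigr)$. Since $p\ge 5$ is coprime to $12$ and $p\nmid n$, we have $p\nmid 12n$, and because $12n+p\equiv 12n\pmod p$ this yields $p\nmid(12n+p)$. Consequently the $p$-adic valuation of $12N+1$ is exactly $2k+1$, which is odd. A positive integer whose valuation at some prime is odd cannot be a perfect square, so $12N+1$ is not a square; thus $\overline{C}_{4,1}(N)$ is even, and by the reduction above $p_{1,1}(N)\equiv 0\pmod 2$.

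There is essentially no hard step once the parity characterization is rewritten as ``$12m+1$ is a square'': the conclusion is a one-line valuation argument, and the only thing to get right is the factorization $12N+1=p^{2k+1}(12n+p)$ together with the observation that $p\nmid n$ forces odd valuation. It is worth remarking that this argument never uses the hypothesis $p\not\equiv 1\pmod{12}$; that condition is inherited from the customary formulation of the underlying $\overline{C}_{4,1}$ congruence, but the valuation argument in fact delivers the conclusion for every prime $p\ge 5$ with $p\nmid n$.
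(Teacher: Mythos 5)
Your proposal is correct, and it shares the paper's first step (invoking Theorem~\ref{thm3} with $t=1$ to transfer the question to $\overline{C}_{4,1}$) but then diverges in how it handles the singular overpartition function. The paper simply cites Corollary~3.6 of Chen, Hirschhorn and Sellers, which asserts $\overline{C}_{4,1}\bigl(p^{2k+1}n+\frac{p^{2k+2}-1}{12}\bigr)\equiv 0\pmod 4$ under the stated hypotheses, and reads off the mod~$2$ statement. You instead use the parity characterization of $\overline{C}_{4,1}$ (odd exactly at $m=j(3j-1)$, equivalently exactly when $12m+1$ is a perfect square), and then the factorization $12N+1=p^{2k+1}(12n+p)$ together with $p\nmid(12n+p)$ shows the $p$-adic valuation of $12N+1$ is the odd number $2k+1$, so $12N+1$ cannot be a square. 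All the steps check out: the completion of the square $12j(3j-1)+1=(6j-1)^2$ and its converse are right, and $p\nmid n$ with $p\ge 5$ does give $v_p(12n+p)=0$. What your route buys is twofold: it is self-contained modulo the parity characterization that the paper already quotes in full, rather than resting on a separate mod~$4$ congruence imported as a black box; and it exposes that the hypothesis $p\not\equiv 1\pmod{12}$ is not needed for the mod~$2$ conclusion (it is only relevant to the stronger mod~$4$ statement being cited), so your argument actually proves a slightly more general result than the theorem as stated.
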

 \begin{proof}
 	Taking $t=1$ in Theorem \ref{thm3}, we have
 	\begin{align}\label{thm5.1}
 	p_{1,1}(n)\equiv \overline{C}_{4,1}(n)\pmod 2.
 	\end{align}
 	Thanks to Chen, Hirschhorn and Sellers (\cite[Corollary 3.6.]{chen2015}) we know that, for all $n\geq 0$, 
 	\begin{align}\label{thm5.2}
 	\overline{C}_{4,1}\left(p^{2 k+1} n+\frac{p^{2 k+2}-1}{12}\right) \equiv 0 \pmod 4.
 	\end{align}
 Combining \eqref{thm5.1} and \eqref{thm5.2}, we deduce the required congruence.
 \end{proof}
 %%%%%%%%%%%%%%%%%%%%%%%%%%%%%%%%%%%%%%%%%
 \begin{theorem}\label{thm11}
 	If $p$ is a prime such that $p \equiv 3\pmod 4$ and $1 \leq j \leq p-1,$ then for all non-negative integers $\alpha$ and $n,$ we have
 	\begin{align*}
 	p_{2,2}\left(p^{2 \alpha+1}(p n+j)+\frac{5\left(p^{2(\alpha+1)}-1\right)}{24}\right) \equiv 0\pmod 2.
 	\end{align*}	
 \end{theorem}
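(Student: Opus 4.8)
The plan is to push everything modulo $2$ down to a single infinite product whose coefficients count representations of $24n+5$ as a sum of two squares, and then to exploit the hypothesis $p\equiv3\pmod4$ through the classical two-square theorem. First I would specialize the computation carried out in the proof of Theorem~\ref{thm3} to $t=2$. From \eqref{thm3.2} we have
\begin{align*}
\sum_{n=0}^{\infty}p_{2,2}(n)q^{n}\equiv\frac{\left(q^{2};q^{2}\right)_{\infty}^{3}}{(q;q)_{\infty}}\pmod2.
\end{align*}
Writing $f_{m}:=(q^{m};q^{m})_{\infty}$ and using the elementary congruence $f_{m}^{2}\equiv f_{2m}\pmod2$, I would simplify $\frac{f_{2}^{3}}{f_{1}}=\frac{f_{2}^{3}f_{1}}{f_{1}^{2}}\equiv\frac{f_{2}^{3}f_{1}}{f_{2}}=f_{1}f_{2}^{2}\equiv f_{1}f_{4}\pmod2$, so that
\begin{align*}
\sum_{n=0}^{\infty}p_{2,2}(n)q^{n}\equiv(q;q)_{\infty}\left(q^{4};q^{4}\right)_{\infty}\pmod2.
\end{align*}

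Next I would apply Euler's pentagonal number theorem to each factor, writing $(q;q)_{\infty}=\sum_{j\in\mathbb{Z}}(-1)^{j}q^{j(3j-1)/2}$ and $(q^{4};q^{4})_{\infty}=\sum_{k\in\mathbb{Z}}(-1)^{k}q^{2k(3k-1)}$. Multiplying and completing the square gives, for the exponent $E(j,k):=\frac{j(3j-1)}{2}+2k(3k-1)$, the identity
\begin{align*}
24\,E(j,k)+5=(6j-1)^{2}+(12k-2)^{2}.
\end{align*}
Modulo $2$ the signs $(-1)^{j+k}$ disappear, so $p_{2,2}(n)$ is congruent to the number of pairs $(j,k)\in\mathbb{Z}^{2}$ with $(6j-1)^{2}+(12k-2)^{2}=24n+5$. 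In particular, if $p_{2,2}(n)$ is odd then $24n+5$ must be expressible as a sum of two integer squares.

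Finally, set $N=p^{2\alpha+1}(pn+j)+\frac{5(p^{2(\alpha+1)}-1)}{24}$ and compute $24N+5=p^{2\alpha+1}\bigl(p(24n+5)+24j\bigr)$. Since $p\equiv3\pmod4$ (with $p=3$ already excluded by integrality of the shift, which forces $\gcd(p,24)=1$) and $1\le j\le p-1$ gives $p\nmid j$, the bracketed factor is congruent to $24j\not\equiv0\pmod p$, hence prime to $p$. Therefore $v_{p}(24N+5)=2\alpha+1$ is odd, and as $p\equiv3\pmod4$ the two-square theorem shows $24N+5$ is not a sum of two squares; the representation count above is then empty, so $p_{2,2}(N)\equiv0\pmod2$. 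The routine steps are the product simplification and the pentagonal-number bookkeeping; the one point demanding care is the completing-the-square step together with verifying that the prime $p\equiv3\pmod4$ occurs to an \emph{odd} power in $24N+5$, which is precisely where the congruence conditions $p\equiv3\pmod4$ and $p\nmid j$ are consumed.
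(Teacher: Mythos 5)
Your proof is correct, but it takes a genuinely different route from the paper. The paper's proof is a two-line citation: it invokes Ahmed and Baruah's congruence $\overline{C}_{8,2}\bigl(p^{2\alpha+1}(pn+j)+\tfrac{5(p^{2(\alpha+1)}-1)}{24}\bigr)\equiv 0\pmod 2$ and then transfers it to $p_{2,2}$ via Theorem \ref{thm3} with $t=2$. You instead give a self-contained elementary argument: reducing the generating function to $(q;q)_{\infty}(q^{4};q^{4})_{\infty}$ modulo $2$, expanding both factors by the pentagonal number theorem, completing the square to get $24E(j,k)+5=(6j-1)^{2}+(12k-2)^{2}$, and then killing the relevant coefficients with the two-square theorem once you verify $v_{p}(24N+5)=2\alpha+1$ is odd. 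All the computations check out (in particular $24N+5=p^{2\alpha+1}\bigl(p(24n+5)+24j\bigr)$ and the exclusion of $p=3$ by integrality of the shift are handled correctly), so in effect you have reproved the mod-$2$ content of the cited Ahmed--Baruah result from scratch. What your approach buys is independence from the external reference and a transparent explanation of where the hypotheses $p\equiv 3\pmod 4$ and $p\nmid j$ enter; what the paper's approach buys is brevity and consistency with its overall strategy of harvesting known singular-overpartition congruences through the parity link of Theorem \ref{thm3}. One small refinement worth adding: your count is of pairs with $(6j-1)^{2}+(12k-2)^{2}=24n+5$, which is a priori a sub-count of all two-square representations, but since you only need the count to vanish when $24n+5$ has \emph{no} representation at all, the weaker implication you state is exactly what is used and the argument is complete.
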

 \begin{proof}
 	In \cite[Theorem 1.7]{ahmed2015}, Ahmed and Baruah proved that, 
 	if $p$ is a prime such that $p \equiv 3\pmod 4$ and $1 \leq j \leq p-1$, then for all non-negative integers $\alpha$ and $n$, 
 	\begin{align*}
 	\overline{C}_{8,2}\left(p^{2 \alpha+1}(p n+j)+\frac{5\left(p^{2(\alpha+1)}-1\right)}{24}\right) \equiv 0\pmod 2.
 	\end{align*}	
 Now, Theorem \ref{thm3} yields that the same congruence is also satisfied by $p_{2,2}(n)$. 
 \end{proof}
 %%%%%%%%%%%%%%%%%%%%%%%%%%%%%
 In the following theorem, we find congruences satisfied by $p_{3,3}(n)$.
 \begin{theorem}\label{thm6-NEW}
 	We have:
 	\begin{enumerate} 
 	\item For all $n \geq 0$,
 	\begin{align} \label{thm6}
 	p_{3,3}(16 n+11) \equiv p_{3,3}(16 n+15) \equiv 0\pmod 2.
 	\end{align}
 \item If $n$ can not be represented as the sum of a pentagonal number and four times a pentagonal number, then
 	\begin{align}\label{thm7}
 	p_{3,3}(16n+3) \equiv 0\pmod 2.
 	\end{align}
 \item  If $n$ can not be represented as the sum of two times a pentagonal number and three times a triangular number, then
 	\begin{align}\label{thm8}
 	p_{3,3}(16 n+7) \equiv 0\pmod 2.
 	\end{align}
 \end{enumerate}
\end{theorem}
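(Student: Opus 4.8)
Since all three parts assert the vanishing of $p_{3,3}$ on fixed residue classes modulo $16$, the plan is to start from one generating function and read off coefficients in those classes. Taking $t=3$ in the proof of Theorem \ref{thm3} gives $\sum_{n\geq0}p_{3,3}(n)q^n\equiv (q^3;q^3)_\infty^3/(q;q)_\infty\pmod2$, and Jacobi's identity $(q^3;q^3)_\infty^3\equiv\psi(q^3)\pmod2$, with $\psi(q)=\sum_{m\geq0}q^{m(m+1)/2}$, simplifies this to $\sum_{n\geq0}p_{3,3}(n)q^n\equiv\psi(q^3)/(q;q)_\infty\pmod2$. I note that if one may quote the parity characterization recorded in the Corollary above (namely that $p_{3,3}(m)\equiv\overline{C}_{12,3}(m)$ is odd exactly when $3m+1$ is a square), then all three parts are immediate: for $c\in\{3,7,11,15\}$ one has $3(16n+c)+1\equiv 3c+1\pmod{16}$, and $3c+1\in\{10,6,2,14\}$ are all quadratic non-residues modulo $16$, so $3(16n+c)+1$ is never a square and $p_{3,3}(16n+c)$ is always even. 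For a self-contained elementary proof, however, I would instead dissect the series above.

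To reach residues modulo $16$ I would first put the denominator into a product adapted to repeated $2$-dissection. Iterating $1/(q;q)_\infty\equiv (q;q)_\infty/(q^2;q^2)_\infty\pmod2$ four times yields
\[
\frac{1}{(q;q)_\infty}\equiv\frac{(q;q)_\infty(q^2;q^2)_\infty(q^4;q^4)_\infty(q^8;q^8)_\infty}{(q^{16};q^{16})_\infty}\pmod2 ,
\]
whose only denominator is a series in $q^{16}$ and so does not affect residues modulo $16$. After multiplying by $\psi(q^3)$ and performing the $16$-dissection, I would track, factor by factor, which products of terms can land in each class $q^{16n+c}$. For part (1) the aim is to verify that no combination yields an exponent $\equiv 11$ or $\equiv 15\pmod{16}$, so both coefficients vanish identically. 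For parts (2) and (3) I would isolate the subseries collecting the exponents $\equiv 3$ and $\equiv 7\pmod{16}$ and show that each is, modulo $2$, supported within the set of integers $n$ representable respectively as a pentagonal number plus four times a pentagonal number and as twice a pentagonal number plus three times a triangular number — the representations generated by $(q;q)_\infty(q^4;q^4)_\infty$ and $(q^2;q^2)_\infty\psi(q^3)$. A non-representable $n$ then contributes a zero coefficient, which is exactly the assertion.

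The hard part will be the dissection bookkeeping: keeping straight, across the factors $\psi(q^3),(q;q)_\infty,(q^2;q^2)_\infty,(q^4;q^4)_\infty,(q^8;q^8)_\infty$, precisely which combinations contribute to each class modulo $16$, and then matching the surviving pieces to the two representation generating functions. As both a check and a cleaner route, I would try to collapse the whole series to a single theta function,
\[
\frac{(q^3;q^3)_\infty^3}{(q;q)_\infty}\equiv\sum_{k=-\infty}^{\infty}q^{3k^2+2k}\pmod2 ,
\]
the right-hand side being $f(q,q^5)$. Since $3k^2+2k\equiv 0,1,5,8\pmod{16}$ for every integer $k$, the coefficient of $q^{16n+c}$ vanishes for each of $c=3,7,11,15$ at once, giving all three parts in the stronger unconditional form $p_{3,3}(4n+2)\equiv p_{3,3}(4n+3)\equiv 0\pmod2$. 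The delicate points along this route are establishing the theta identity itself and reconciling its uniform vanishing with the representation-theoretic phrasing of parts (2) and (3).
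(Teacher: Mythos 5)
Your proposal is correct, but it reaches the result by a genuinely different route than the paper. The paper's proof is a two-step citation: it specializes Theorem \ref{thm3} to $t=3$ to get $p_{3,3}(n)\equiv\overline{C}_{12,3}(n)\pmod 2$ and then invokes Li and Yao's congruences $\overline{C}_{12,3}(16n+r)\equiv 0\pmod 8$ for $r\in\{3,7,11,15\}$ from \cite{Li2018}, whose hypotheses are exactly the representability conditions in parts (2) and (3). Your first route instead quotes the parity characterization ($p_{3,3}(m)$ is odd precisely when $3m+1$ is a square, i.e., da Silva--Sellers' Theorem 7 from \cite{sellers}, recorded in the paper as the Corollary preceding this theorem); since $3(16n+c)+1\equiv 3c+1\equiv 2\pmod 4$ for $c\in\{3,7,11,15\}$ and squares are $\equiv 0,1\pmod 4$, all three parts follow at once --- indeed unconditionally and in the stronger form $p_{3,3}(4n+2)\equiv p_{3,3}(4n+3)\equiv 0\pmod 2$, which shows the representability hypotheses in parts (2) and (3) are superfluous for the mod $2$ statement (they matter only for Li--Yao's mod $8$ congruences). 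Both arguments rest on one external theorem apiece, so neither is more self-contained, but yours yields a strictly stronger conclusion. Your concluding theta identity $(q^3;q^3)_\infty^3/(q;q)_\infty\equiv\sum_{k}q^{3k^2+2k}\pmod 2$ is also correct and checkable via Jacobi's identity, $(q;q)_\infty^2\equiv(q^2;q^2)_\infty\pmod 2$, and the triple product; it is equivalent to the parity characterization because $3(3k^2+2k)+1=(3k+1)^2$, and the residues of $3k^2+2k$ modulo $16$ are indeed only $\{0,1,5,8\}$. However, as written, that route and the $16$-dissection sketch are left with acknowledged gaps, so the complete, quotable content of your proposal is the first route.
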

 %%%%%%%%%%%%%%%%%%%%%%%%%%%
 \begin{proof}
 	Taking $t=3$ in Theorem \ref{thm3}, for all $n\geq 0$, we have 	
 	\begin{align}\label{thm.1}
 	p_{3,3}(n)\equiv \overline{C}_{12,3}(n) \pmod 2.
 	\end{align}
 	By Theorems 4.1, 4.2 and 4.3 of Li and Yao \cite{Li2018} we know that, for all $n\geq 0$, 
 	\begin{align}\label{thm.2}
 	\overline{C}_{12,3}(16 n+11) \equiv \overline{C}_{12,3}(16 n+15) \equiv 0 \pmod 8;
 	\end{align}
 	\begin{align}\label{thm.3}
 	\overline{C}_{12,3}(16 n+3) \equiv 0 \pmod 8;
 	\end{align}
 	and
 	\begin{align}\label{thm.4}
 	\overline{C}_{12,3}(16 n+7) \equiv 0 \pmod 8.
 	\end{align}
 Combining \eqref{thm.1}, \eqref{thm.2}, \eqref{thm.3} and \eqref{thm.4} we complete the proof of the theorem.
 \end{proof}
 \begin{corollary}\label{cor1}
 	We have:
 	\begin{enumerate}
 	\item Let $p \geq 5$ be a prime with $p \equiv 3\pmod 4$. For $\alpha, n \geq 0,$ if $p \nmid n$ then
 	\begin{align*}
 	p_{3,3}\left(16 p^{2 \alpha+1} n+\frac{10 p^{2 \alpha+2}-1}{3}\right) \equiv 0\pmod 2.
 	\end{align*}
 	\item Let $p \geq 5$ be a prime with $\left(\frac{-2}{p}\right)=-1$. For $\alpha, n \geq 0$, if $p \nmid n$ then
 	\begin{align*}
 	p_{3,3}\left(16 p^{2 \alpha+1} n+\frac{22 p^{2 \alpha+2}-1}{3}\right) \equiv 0\pmod 2.
 	\end{align*}
 	\end{enumerate}
 \end{corollary}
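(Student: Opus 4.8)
The plan is to deduce both congruences from parts (2) and (3) of Theorem~\ref{thm6-NEW} by recognizing the progressions in the corollary as instances of $16N+3$ and $16N+7$ for suitable indices $N$, and then showing that those indices fall outside the respective ``representable'' sets. Since $p\ge 5$ is prime, $\gcd(p,24)=1$, so $p^2\equiv 1\pmod{24}$ and hence $p^{2\alpha+2}\equiv 1\pmod{24}$; thus $\frac{5(p^{2\alpha+2}-1)}{24}$ and $\frac{11(p^{2\alpha+2}-1)}{24}$ are integers. A direct computation then gives
\[
16p^{2\alpha+1}n+\frac{10p^{2\alpha+2}-1}{3}=16N_1+3,\qquad
16p^{2\alpha+1}n+\frac{22p^{2\alpha+2}-1}{3}=16N_2+7,
\]
where $N_1=p^{2\alpha+1}n+\frac{5(p^{2\alpha+2}-1)}{24}$ and $N_2=p^{2\alpha+1}n+\frac{11(p^{2\alpha+2}-1)}{24}$.

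For statement (1), I would argue by contradiction: if $N_1=g_j+4g_k$ for generalized pentagonal numbers $g_r=\frac{r(3r-1)}{2}$, then multiplying by $24$ and completing squares yields
\[
24N_1+5=(6j-1)^2+\bigl(2(6k-1)\bigr)^2,
\]
so $24N_1+5$ would be a sum of two squares. However,
\[
24N_1+5=24p^{2\alpha+1}n+5p^{2\alpha+2}=p^{2\alpha+1}(24n+5p),
\]
and because $p\ge 5$ and $p\nmid n$ we have $p\nmid(24n+5p)$, so $p$ divides $24N_1+5$ to the exact odd power $2\alpha+1$. As $p\equiv 3\pmod 4$, the two-squares theorem forbids this, so $N_1$ is not of the form $g_j+4g_k$ and \eqref{thm7} gives $p_{3,3}(16N_1+3)\equiv 0\pmod 2$.

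For statement (2), the same strategy applies with the form $x^2+2y^2$: if $N_2=2g_j+3T_k$ with $T_k=\frac{k(k+1)}{2}$, then completing squares gives
\[
24N_2+11=2(6j-1)^2+9(2k+1)^2=\bigl(3(2k+1)\bigr)^2+2(6j-1)^2,
\]
a value of $x^2+2y^2$. Since $24N_2+11=p^{2\alpha+1}(24n+11p)$ again carries $p$ to the exact odd power $2\alpha+1$, and $\left(\frac{-2}{p}\right)=-1$ means $p\equiv 5,7\pmod 8$, such a prime cannot divide any value of $x^2+2y^2$ to an odd power; hence $N_2$ is not of the form $2g_j+3T_k$ and \eqref{thm8} gives $p_{3,3}(16N_2+7)\equiv 0\pmod 2$.

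The routine parts are the modular rewriting and the completing-the-square bookkeeping. The crux is matching each combinatorial representation to the correct binary quadratic form and invoking the right classical criterion—namely that an integer is a sum of two squares iff every prime $\equiv 3\pmod 4$ occurs to an even power, and is represented by $x^2+2y^2$ iff every prime $\equiv 5,7\pmod 8$ occurs to an even power (both forms having class number one). Once these are in place, the hypotheses $p\ge 5$ and $p\nmid n$ are exactly what force $p$ to appear to an odd power, completing the argument.
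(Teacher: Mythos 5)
Your proof is correct, but it takes a genuinely different route from the paper. The paper's proof is a two-line citation: it quotes Corollaries 4.2 and 4.3 of Li and Yao, which assert the congruences $\overline{C}_{12,3}\bigl(16p^{2\alpha+1}n+\tfrac{10p^{2\alpha+2}-1}{3}\bigr)\equiv \overline{C}_{12,3}\bigl(16p^{2\alpha+1}n+\tfrac{22p^{2\alpha+2}-1}{3}\bigr)\equiv 0\pmod 8$ outright, and then reduces modulo $2$ via Theorem~\ref{thm3}. You instead derive the corollary from parts (2) and (3) of Theorem~\ref{thm6-NEW}, which is the route Li and Yao themselves presumably take from their Theorems 4.2--4.3 to their corollaries: you write the progressions as $16N_1+3$ and $16N_2+7$, translate representability of $N_1$ and $N_2$ into representability of $24N_1+5$ by $x^2+y^2$ and of $24N_2+11$ by $x^2+2y^2$ via $24\cdot\tfrac{j(3j-1)}{2}+1=(6j-1)^2$ and $8T_k+1=(2k+1)^2$, and rule these out because $p$ divides $p^{2\alpha+1}(24n+5p)$, respectively $p^{2\alpha+1}(24n+11p)$, to the exact odd power $2\alpha+1$ while $p\equiv 3\pmod 4$, respectively $\left(\frac{-2}{p}\right)=-1$; both forms have class number one, so the classical even-exponent criteria apply. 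All the arithmetic checks out (including integrality of $N_1,N_2$ from $p^2\equiv 1\pmod{24}$), and your argument is robust to whether ``pentagonal number'' in Theorem~\ref{thm6-NEW} means ordinary or generalized pentagonal numbers, since you exclude the larger generalized set. What your version buys is a self-contained deduction within the paper that also explains exactly why the hypotheses $p\equiv 3\pmod 4$ and $\left(\frac{-2}{p}\right)=-1$ appear; what the paper's version buys is brevity, at the cost of importing the stronger mod-$8$ statements as black boxes.
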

 \begin{proof}
 	By Corollary 4.2 and Corollary 4.3 of Li and Yao \cite{Li2018}, for all $n$,  we have 
 	\begin{align}\label{cor.1}
 	\overline{C}_{12,3}\left(16 p^{2 \alpha+1} n+\frac{10 p^{2 \alpha+2}-1}{3}\right) \equiv 0\pmod 8
 	\end{align}
 	and 
 	\begin{align}\label{cor.2}
 	\overline{C}_{12,3}\left(16 p^{2 \alpha+1} n+\frac{22 p^{2 \alpha+2}-1}{3}\right) \equiv 0\pmod 8.
 	\end{align}
 Now, combining \eqref{thm.1}, \eqref{cor.1} and \eqref{cor.2} we complete the proof.
 \end{proof}
 %%%%%%%%%%%%%%%%%%%%%%%%%%%%%%%%%%%%%%%%%
 In \cite{utpal2018}, Pore and Fathima found congruences for $\overline{C}_{20, 5}(n)$. Combining their results and Theorem \ref{thm3} for $t=5$, we obtain the following two theorems.
 \begin{theorem}\label{thm12}
 	For all $\alpha, n \geq 0$, we have 
 	\begin{align}
 	\label{thm12.1} p_{5,5}\left(2 \cdot 5^{2 \alpha+1} n+\frac{31 \cdot 5^{2 \alpha}-7}{12}\right) \equiv 0 \pmod 2; \\
 \label{thm12.2}	 p_{5,5}\left(2 \cdot 5^{2 \alpha+1} n+\frac{79 \cdot 5^{2 \alpha}-7}{12}\right) \equiv 0 \pmod 2; \\
 	p_{5,5}\left(2 \cdot 5^{2 \alpha+2} n+\frac{83 \cdot 5^{2 \alpha+1}-7}{12}\right) \equiv 0 \pmod 2; \notag \\
 	p_{5,5}\left(2 \cdot 5^{2 \alpha+2} n+\frac{107 \cdot 5^{2 \alpha+1}-7}{12}\right) \equiv 0 \pmod 2.\notag 
 	\end{align}
 \end{theorem}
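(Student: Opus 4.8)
The plan is to obtain all four congruences as immediate consequences of the parity identity of Theorem~\ref{thm3} combined with the congruences for the singular overpartition function $\overline{C}_{20,5}(n)$ established by Pore and Fathima \cite{utpal2018}. First I would specialize Theorem~\ref{thm3} to $t=5$, which gives
\begin{align*}
p_{5,5}(n)\equiv \overline{C}_{20,5}(n)\pmod 2
\end{align*}
for all $n\geq 0$. This reduces each of the four displayed congruences for $p_{5,5}$ to the corresponding congruence for $\overline{C}_{20,5}$ on the same arithmetic progression, so that no further analysis of the mex-related function itself is required.

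Next I would extract from \cite{utpal2018} the four relevant statements, namely that
\begin{align*}
\overline{C}_{20,5}\!\left(2\cdot 5^{2\alpha+1}n+\tfrac{31\cdot 5^{2\alpha}-7}{12}\right)\quad\text{and}\quad
\overline{C}_{20,5}\!\left(2\cdot 5^{2\alpha+1}n+\tfrac{79\cdot 5^{2\alpha}-7}{12}\right)
\end{align*}
vanish modulo $2$, together with the two analogous statements at the next level, where the modulus is $2\cdot 5^{2\alpha+2}$ and the residues are $\tfrac{83\cdot 5^{2\alpha+1}-7}{12}$ and $\tfrac{107\cdot 5^{2\alpha+1}-7}{12}$. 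Matching the progressions displayed in Theorem~\ref{thm12} against those in \cite{utpal2018} identifies each of the four cases.

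Finally, substituting the reduction $p_{5,5}(n)\equiv\overline{C}_{20,5}(n)\pmod 2$ into these four congruences yields \eqref{thm12.1}, \eqref{thm12.2}, and the two unlabeled displays simultaneously, which completes the proof.

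The step I expect to demand the most care is not conceptual but a matter of bookkeeping. One must verify that Pore and Fathima's congruences are stated (or can be read off) to an even modulus, so that reducing them modulo $2$ is legitimate; and one must carefully align their parametrization of the modulus and residue with the four progressions above, since a discrepancy between the $5^{2\alpha+1}$ and $5^{2\alpha+2}$ levels, or in the exact residues $\tfrac{31\cdot 5^{2\alpha}-7}{12}$, $\tfrac{79\cdot 5^{2\alpha}-7}{12}$, and the like, would otherwise go unnoticed. Beyond this verification the argument is entirely mechanical, since Theorem~\ref{thm3} does all the conceptual work.
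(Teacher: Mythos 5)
Your proposal is correct and follows exactly the paper's own argument: specialize Theorem~\ref{thm3} to $t=5$ to get $p_{5,5}(n)\equiv \overline{C}_{20,5}(n)\pmod 2$, then import the four congruences for $\overline{C}_{20,5}$ from Pore and Fathima (the paper cites their Theorem~1.6). The bookkeeping caveat you raise is sensible but the paper treats it as immediate.
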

\begin{proof}
Combining \cite[Theorem 1.6]{utpal2018} and Theorem \ref{thm3} for $t=5$, we obtain the desired congruences satisfied by $p_{5,5}(n)$.
\end{proof}
 \begin{theorem}\label{thm13}
 	Let $p \geq 5$ be a prime such that $\left(\frac{-10}{p}\right)=-1$ and $j=1, 2, \ldots, p-1$. Then, for all $\alpha, n \geq 0$,
 	\begin{align*}
 	p_{5,5}\left(2 p^{2 \alpha+1}(p n+j)+7 \times \frac{p^{2 \alpha+2}-1}{12}\right) \equiv 0 \pmod 2.
 	\end{align*}
 \end{theorem}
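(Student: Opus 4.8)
The plan is to follow the same template used in the proofs of Theorems~\ref{thm5}, \ref{thm11}, and especially \ref{thm12}: reduce the asserted congruence for $p_{5,5}$ to an already-established congruence for the Andrews singular overpartition function $\overline{C}_{20,5}$, with Theorem~\ref{thm3} serving as the bridge.

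First I would specialize Theorem~\ref{thm3} to $t=5$, which gives
\begin{align*}
p_{5,5}(n)\equiv \overline{C}_{20,5}(n)\pmod 2
\end{align*}
for every $n\geq 0$. This transfers the problem entirely to the singular overpartition side: it now suffices to show that $\overline{C}_{20,5}$ vanishes modulo $2$ on the progression $2p^{2\alpha+1}(pn+j)+7(p^{2\alpha+2}-1)/12$.

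Next I would invoke the appropriate congruence of Pore and Fathima \cite{utpal2018}. Theorem~\ref{thm12} already drew on \cite[Theorem 1.6]{utpal2018}; the companion statement in the same paper---the one whose hypothesis is that $p\geq 5$ is prime with $\left(\frac{-10}{p}\right)=-1$---asserts precisely that
\begin{align*}
\overline{C}_{20,5}\left(2 p^{2 \alpha+1}(p n+j)+7 \times \frac{p^{2 \alpha+2}-1}{12}\right) \equiv 0 \pmod{2}
\end{align*}
for $1\leq j\leq p-1$ and all $\alpha, n\geq 0$ (in fact modulo a higher power of $2$, but congruence modulo $2$ is all that is required here). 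Combining this with the $t=5$ case of Theorem~\ref{thm3} yields the theorem at once.

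I expect no genuine obstacle in this argument: the substantive input is the external result on $\overline{C}_{20,5}$, and Theorem~\ref{thm3} reduces the mex-partition congruence to it with no further analytic work. The only point demanding care is bookkeeping---confirming that the arithmetic progression, the admissible range of $j$, and the Legendre-symbol condition in the cited Pore--Fathima theorem coincide exactly with those in the present statement, so that the reduction modulo $2$ transfers the congruence verbatim.
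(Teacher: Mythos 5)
Your proposal matches the paper's proof exactly: the paper specializes Theorem~\ref{thm3} to $t=5$ and combines it with the corresponding congruence of Pore and Fathima (\cite[Theorem 1.7]{utpal2018}) for $\overline{C}_{20,5}$. No differences of substance.
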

 \begin{proof}
 Combining \cite[Theorem 1.7]{utpal2018} and Theorem \ref{thm3} for $t=5$, we obtain the desired congruences satisfied by $p_{5,5}(n)$.
 \end{proof}
 \begin{theorem}\label{thm14}
 	For all $\alpha\geq 0,$ we have
 	\begin{align}\label{thm14.1}
 	p_{7,7}\left(2 \cdot 7^{2 \alpha+1} n+\frac{(11+12 r) \cdot 49^{\alpha}-5}{6}\right) \equiv 0\pmod 2,~~r\in\{3,4,6\}
 	\end{align}
 	and
 	\begin{align}\label{thm14.2}
 	p_{7,7}\left(2 \cdot 49^{\alpha+1} n+\frac{(12 s+5) \cdot 7^{2 \alpha+1}-5}{6}\right) \equiv 0\pmod 2,~~s\in\{2,4,5\}
 	\end{align}
 \end{theorem}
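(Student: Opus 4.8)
The plan is to reduce the statement entirely to known congruences for the Andrews singular overpartition function $\overline{C}_{28,7}(n)$, exactly as was done for $\overline{C}_{12,3}$ and $\overline{C}_{20,5}$ in the preceding theorems. First I would specialize Theorem \ref{thm3} to the case $t=7$, which yields
\begin{align*}
p_{7,7}(n)\equiv \overline{C}_{28,7}(n)\pmod 2
\end{align*}
for every $n\geq 0$. Because this is a congruence modulo $2$, any congruence $\overline{C}_{28,7}(\text{arithmetic progression})\equiv 0$ to an \emph{even} modulus immediately transfers, after reduction mod $2$, to the identical statement for $p_{7,7}$ along the same progression.

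Next I would invoke the relevant congruences for $\overline{C}_{28,7}(n)$ supplied by the literature on singular overpartitions. Concretely, I expect a source establishing, for all $\alpha,n\geq 0$,
\begin{align*}
\overline{C}_{28,7}\left(2\cdot 7^{2\alpha+1}n+\frac{(11+12r)\cdot 49^{\alpha}-5}{6}\right)\equiv 0\pmod{M},\quad r\in\{3,4,6\},
\end{align*}
together with
\begin{align*}
\overline{C}_{28,7}\left(2\cdot 49^{\alpha+1}n+\frac{(12s+5)\cdot 7^{2\alpha+1}-5}{6}\right)\equiv 0\pmod{M},\quad s\in\{2,4,5\},
\end{align*}
for some even modulus $M$ (typically $M=8$, paralleling the Li--Yao and Pore--Fathima results already cited). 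Reducing these modulo $2$ and combining with the displayed equivalence $p_{7,7}(n)\equiv \overline{C}_{28,7}(n)\pmod 2$ produces precisely \eqref{thm14.1} and \eqref{thm14.2}, completing the argument.

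The only genuine obstacle is the availability of those $\overline{C}_{28,7}$ congruences. Were they not already in the literature, one would have to derive them directly from the generating function \eqref{gen-fun-sop} specialized to $(k,i)=(28,7)$: perform a $7$-dissection, isolate the residue classes modulo $2\cdot 7^{2\alpha+1}$ and modulo $2\cdot 49^{\alpha+1}$ indexed by $r\in\{3,4,6\}$ and $s\in\{2,4,5\}$, and iterate a Hecke-operator or modular-form vanishing argument in $\alpha$—essentially the computation Pore and Fathima carried out for $\overline{C}_{20,5}$. Granting such a source, however, the proof of Theorem \ref{thm14} collapses to a one-line application of Theorem \ref{thm3}, so the substantive content lies squarely in the singular-overpartition congruences rather than in the passage to $p_{7,7}$.
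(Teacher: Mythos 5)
Your proposal matches the paper's proof exactly: the paper specializes Theorem \ref{thm3} to $t=7$ and combines it with two congruences for $\overline{C}_{28,7}(n)$ established by Li and Yao (equations (5.22) and (5.23) in the proof of their Theorem 5.1), which hold modulo an even modulus and therefore descend to the stated congruences modulo $2$. The source you anticipated is precisely the one the paper cites, so there is no gap.
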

 \begin{proof}
 In the proof of Theorem 5.1, Li and Yao \cite{Li2018} found two congruences satisfied by $\overline{C}_{28,7}(n)$, for example see  \cite[(5.22) \& (5.23)]{Li2018}. Combining these two congruences and Theorem \ref{thm3} for $t=7$, we complete the proof of the theorem.	
 \end{proof}
 \begin{remark}
 	In \cite[Theorem 11]{sellers}, da Silva and Sellers found relations between $p_{t,t}(n)$ and the $t$-core partition functions. They used certain congruences for $t$-core partition functions (obtained by Radu and Sellers \cite{Radu-sellers} using modular forms) to find several congruences satisfied by $p_{t,t}(n)$ when $t=5, 7, 11, 13, 17, 19, 23$. They also proposed to find a fully elementary proof of their congruences listed in Theorem 11.  Putting $\alpha=0$ in \eqref{thm12.1} and \eqref{thm12.2}, we obtain the congruences for $p_{5,5}(n)$ listed in \cite[Theorem 11]{sellers}. Again, putting $\alpha=0$ in \eqref{thm14.1}, we obtain the congruences for $p_{7,7}(n)$ listed in \cite[Theorem 11]{sellers}.
 \end{remark}
 %%%%%%%%%%%%%%%%%%%%%%%%%%%%%%%%%%%%%%%%%
 \begin{theorem}
 	Let $p \geq 5$ be a prime with $\left(\frac{-21}{p}\right)=-1$. We have:
 	\begin{enumerate}
 	\item For $n, \alpha, \beta \geq 0$,
 	\begin{align*}
 	p_{7,7}\left(2 \cdot 7^{2 \alpha+1} p^{2 \beta} n+\frac{(11+12 r) \cdot 49^{\alpha} p^{2 \beta}-5}{6}\right) \equiv 0\pmod 2
 	\end{align*}
 	and
 	\begin{align*}
 	p_{7,7}\left(2 \cdot 49^{\alpha+1} p^{2 \beta} n+\frac{(5+12 s) \cdot 7^{2 \alpha+1} p^{2 \beta}-5}{6}\right) \equiv 0\pmod 2,
 	\end{align*}
 	where $r \in\{3,4,6\}$ and $s \in\{2,4,5\}$.
 \item For $n, \alpha, \beta \geq 0$, if $p \nmid n$ then
 	\begin{align*}
 	p_{7,7}\left(2 \cdot 49^{\alpha} p^{2 \beta+1} n+\frac{11 \cdot 49^{\alpha} p^{2 \beta+2}-5}{6}\right) \equiv 0\pmod 2.
 	\end{align*}
 	\end{enumerate}
 \end{theorem}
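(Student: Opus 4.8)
The plan is to eliminate $p_{7,7}$ at the very first step and then to quote the singular-overpartition congruences of Li and Yao \cite{Li2018}, exactly as in the proof of Theorem \ref{thm14}. Setting $t=7$ in Theorem \ref{thm3} gives $p_{7,7}(n)\equiv\overline{C}_{28,7}(n)\pmod 2$ for every $n\ge 0$, so it suffices to establish all six displayed congruences with $p_{7,7}$ replaced by $\overline{C}_{28,7}$. The $\beta=0$ specializations are already the content of Theorem \ref{thm14} (coming from Li and Yao's congruences (5.22) and (5.23)), so what is genuinely new here is the insertion of the auxiliary prime $p$ satisfying $\left(\frac{-21}{p}\right)=-1$.

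For that input I would use the arithmetic of the series modulo $2$. By the $t=7$ case of \eqref{thm3.3} one has $\sum_{n\ge 0}\overline{C}_{28,7}(n)q^n\equiv (q^7;q^7)_\infty^3/(q;q)_\infty\pmod 2$, and after the $2$- and $7$-dissections used by Li and Yao the relevant subseries is, modulo $2$, governed by a binary quadratic form of discriminant $-84$ (for instance $x^2+21y^2$); this is the $t=7$ analogue of the pentagonal/triangular representations appearing for $t=3$ in Theorem \ref{thm6-NEW} and Corollary \ref{cor1}. A prime $p\ge 5$ with $\left(\frac{-21}{p}\right)=\left(\frac{-84}{p}\right)=-1$ is inert for this form, so its Hecke action at $p$ annihilates the subseries modulo $2$. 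Writing $M$ for the argument of $p_{7,7}$, one checks $6M+5=p^{2\beta}\bigl[12\cdot 7^{2\alpha+1}n+(11+12r)\,49^\alpha\bigr]$ in part (1), so replacing $n$ by $p^{2\beta}n$ merely scales the represented integer by the square $p^{2\beta}$ and preserves the vanishing, whereas in part (2) one finds $6M+5=49^\alpha p^{2\beta+1}(12n+11p)$, which under $p\nmid n$ is divisible by $p$ to exactly the odd power $2\beta+1$, and an inert prime to an odd power forces the coefficient to be even. Transporting these $\overline{C}_{28,7}$-congruences back through $p_{7,7}\equiv\overline{C}_{28,7}\pmod 2$ then yields all six congruences.

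I expect the real work to be bookkeeping rather than a new idea. First I would verify that the progressions displayed, such as $2\cdot 7^{2\alpha+1}p^{2\beta}n+\frac{(11+12r)49^\alpha p^{2\beta}-5}{6}$ and its companions, map under $M\mapsto 6M+5$ exactly onto the progressions on which Li and Yao prove $\overline{C}_{28,7}\equiv 0$, and that the admissible residues $r\in\{3,4,6\}$ and $s\in\{2,4,5\}$ are precisely the ones surviving the dissection. The one point that could require genuine extra effort is whether Li and Yao record the prime-power-extended congruences for $\overline{C}_{28,7}$ explicitly (as they do for $\overline{C}_{12,3}$ in their Corollaries 4.2 and 4.3) or only the base cases (5.22) and (5.23); if only the latter are stated, I would reprove the extension directly by applying the Hecke operator $T_p$ to the relevant discriminant-$-84$ theta series, and it is at this step that the hypothesis $\left(\frac{-21}{p}\right)=-1$ enters. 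I regard this Hecke-operator extension as the main obstacle, everything else being a routine combination of Theorem \ref{thm3} with the cited results.
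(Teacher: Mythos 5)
Your proposal follows essentially the same route as the paper: reduce to $\overline{C}_{28,7}$ via Theorem \ref{thm3} with $t=7$ and then quote Li and Yao. The contingency you flag as the main obstacle does not arise, since Li and Yao's Theorems 5.1 and 5.2 already state the prime-power-extended congruences \eqref{thm9.2}, \eqref{thm9.3} and \eqref{thm10.1} modulo $4$ (with the auxiliary prime $p$ satisfying $\left(\frac{-21}{p}\right)=-1$ and the $p^{2\beta}$, $p^{2\beta+1}$ factors in place), so the paper's proof is a direct citation plus the parity transfer.
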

 %%%%%%%%%%%%%%%%%%%%%%%%%%%%%
 \begin{proof}
 	Li and Yao (\cite[Theorem 5.1 and 5.2 ]{Li2018}) proved that, for all $n\geq 0$, 
 	\begin{align}\label{thm9.2}
 	\overline{C}_{28,7}\left(2 \cdot 7^{2 \alpha+1} p^{2 \beta} n+\frac{(11+12 r) \cdot 49^{\alpha} p^{2 \beta}-5}{6}\right) \equiv 0\pmod 4;
 	\end{align}
 	\begin{align}\label{thm9.3}
 	\overline{C}_{28,7}\left(2 \cdot 49^{\alpha+1} p^{2 \beta} n+\frac{(5+12 s) \cdot 7^{2 \alpha+1} p^{2 \beta}-5}{6}\right) \equiv 0\pmod 4
 	\end{align}
 	and
 	\begin{align}\label{thm10.1}
 	\overline{C}_{28,7}\left(2 \cdot 49^{\alpha} p^{2 \beta+1} n+\frac{11 \cdot 49^{\alpha} p^{2 \beta+2}-5}{6}\right) \equiv 0\pmod 4.
 	\end{align}
 Combining \eqref{thm9.2}, \eqref{thm9.3}, \eqref{thm10.1}, and Theorem \ref{thm3} with $t=7$, we find the desired congruences satisfied by $p_{7,7}(n)$.
 \end{proof}
 %%%%%%%%%%%%%%%%%%%%%%%%%%%%%%%

%\bibliographystyle{acm}
%\bibliographystyle{sej}
%\bibliographystyle{plain}
%\bibliography{ref.cray.bib}
\end{document}